\DeclareMathOperator*{\argmin}{arg\,min}
\DeclareMathOperator{\sconv}{sconv}
\begin{document}
\renewcommand{\oddsidemargin}{0cm}
\renewcommand{\evensidemargin}{0cm}

\begin{titlepage}
\newlength{\centeroffset}
\setlength{\centeroffset}{-0.5\oddsidemargin}
\addtolength{\centeroffset}{0.5\evensidemargin}
\thispagestyle{empty}
\vspace*{\stretch{1}}
\noindent\hspace*{\centeroffset}\makebox[0pt][l]{\begin{minipage}{\textwidth}
\includegraphics[height=1.5cm]{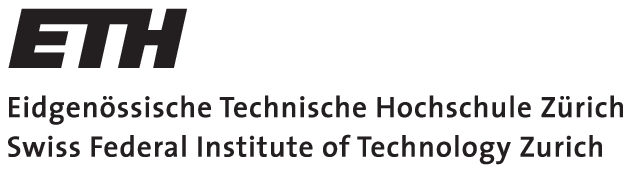}

\vspace{\stretch{1}}
\begin{center}
\Large\bfseries 
\vspace{5mm} 
\normalsize 
\vspace{1cm}
\Huge\bfseries Bounds for Rademacher Processes\\
via Chaining\\
\noindent\rule[-1ex]{\textwidth}{2pt}\\[2.5ex]
\bfseries\normalsize Technical Report
\end{center}
\end{minipage}}
\vspace{\stretch{0.5}}
\vspace{2mm}

\vspace{\stretch{1}}
\noindent\hspace*{\centeroffset}\makebox[0pt][l]{\begin{minipage}{\textwidth}
\centering
\vspace{1cm}
{\bfseries\Large Johannes Christof Lederer}\\
lederer@stat.math.ethz.ch\\
\end{minipage}}

\vspace{1cm}

\begin{abstract}
\textbf{Abstract:}
We study Rademacher processes where the coefficients are functions
evaluated at fixed, but arbitrary covariables. Specifically, we assume the
function class under consideration to be parametrized by the standard
cocube in \itshape l \normalfont dimensions and we are mainly interested in
the high-dimensional, asymptotic situation, that is, \itshape l \normalfont
as well the number of Rademacher variables \itshape n \normalfont go to
infinity with \itshape l \normalfont much larger than \itshape
n\normalfont. We refine and apply classical entropy bounds and Majorizing
Measures, both going back to the well known idea of chaining. That way, we
derive general upper bounds for Rademacher processes. In the linear case and
under high correlations, we further improve on these bounds. In particular,
we give bounds independent of \itshape l \normalfont for highly correlated
covariables.
\end{abstract}

\vspace{\stretch{2}}
\noindent\hspace*{\centeroffset}\makebox[0pt][l]{\begin{minipage}{\textwidth}
\centering
{\bfseries October 2010}\\
\end{minipage}}

\end{titlepage}

\newcommand{\degree}{\ensuremath{^\circ}}
\newcommand{\pmes}{\mathcal{P}}
\newcommand{\mr}{\mathbb{R}}
\newcommand{\mq}{\mathbb{Q}}
\newcommand{\mn}{\mathbb{N}}
\newcommand{\mz}{\mathbb{Z}}
\newcommand{\me}{\mathbb{E}}

\newcommand{\mca}{\mathcal{A}}
\newcommand{\mcb}{\mathcal{B}}

\newcommand{\mpr}{\mathbb{P}}

\newtheorem{theorem}{Theorem}[section]
\newtheorem{proposition}{Proposition}[section]
\newtheorem{lemma}{Lemma}[section]
\newtheorem{corollary}{Corollary}[section]
\newtheorem{remark}{Remark}[section]
\newtheorem{example}{Example}[section]
\newtheorem{definition}{Definition}[section]

\newenvironment{pro}{\begin{proof}[Proof]}{\end{proof}}

\fancypagestyle{page2}{\renewcommand{\headrulewidth}{0pt}
\fancyfoot[C]{\sffamily\bfseries \thepage}
}
\fancypagestyle{normal}{\renewcommand{\headrulewidth}{0pt}
\fancyhead[CO]{ J.C. Lederer}
\fancyhead[CE]{\leftmark}
\fancyfoot[C]{\sffamily\bfseries \thepage}
}

\newpage

\pagestyle{page2}

\section{Introduction}

We study upper bounds for the quantity
\begin{equation}
\label{eq.quantity}
  \me\sup_{\theta\in\Theta}\left|\sum_{i=1}^n\epsilon_i\phi_\theta(x_i)\right|
\end{equation}
with $\Theta:=\{\theta\in\mr^l:\Vert\theta\Vert_1\leq M \}$, i.i.d. Rademacher
variables $\epsilon_i$ and real valued functions $\phi_\theta$ evaluated at
fixed but arbitrary $x_i$. We are
mainly interested in the high-dimensional, asymptotic situation, i.e., $l\gg n$ and
$n,l\to \infty$ and we treat a general setting, the linear case as well as a
setting involving strongly correlated $x_i$. We show in particular that strong
correlations can lead to better asymptotic bounds.\\

Chaining is the main tool for our investigations. For an arbitrary
process $\{Z_\lambda:\lambda\in \Lambda\}$ it means the following:
instead of studying terms of the form $|Z_\lambda-Z_{\lambda'}|$ for (possibly very distinct) random variables
$Z_\lambda,Z_{\lambda'}$ directly, one applies the triangular inequality
\begin{equation*}
  |Z_\lambda-Z_{\lambda'}|\leq \sum_{j=1}^m |Z_{\lambda_n}-Z_{\lambda_{n-1}}|
\end{equation*}
and studies the increments $|Z_{\lambda_n}-Z_{\lambda_{n-1}}|$, where
$\lambda_n,\lambda_{n-1}\in\Lambda$, ${\lambda_0}={\lambda}$ and
${\lambda_m}={\lambda'}$. Usually, the $Z_{\lambda_0},...,Z_{\lambda_m}$ are
constructed such that $Z_\lambda-Z_{\lambda'}$ can be thought
of as the sum of the small
``chain links'' $Z_{\lambda_n}-Z_{\lambda_{n-1}}$. It's often easier
to control these chain links than to control $Z_\lambda-Z_{\lambda'}$
directly. This approach leads to two general bounds for empirical
processes. On the one hand, there is the
classical ``Entropy Bound'' (see for example \cite{Talagrand05},
\cite{vdVaart00} and references therein). Its integral version as stated in
\cite{vdVaart00} is
introduced and refined at the beginning of the second part. Then, we apply this bound to the problem stated
above where we follow ideas
given in \cite{Carl85} for some entropy calculations. On the other hand, there are "Majorizing
Measures" (see for example \cite{Rhee88}, \cite{Talagrand94} and
\cite{Talagrand96}). They are introduced and applied in the third
part. Majorizing Measures are rather difficult to use,
however, we show that for highly correlated
covariables they can lead to substantially better results.\\

We conclude this section with some notation and the main
results.

\paragraph{Notation:} For a pseudometric space $(S,d)$ with unit ball $B$ we denote the
covering numbers by $N(S,d,\epsilon)$, i.e., $N(S,d,\epsilon)$ is the number of translates of $\epsilon B$ needed to
cover $S$. The logarithm of the covering numbers (as a
function of $\epsilon$) is called entropy. We define similarly $D(S,d,\epsilon)$ as the maximal number of $\epsilon$-separated points in
$S$. Obviously, $N(S,d,\epsilon)\leq D(S,d,\epsilon) \leq
N(S,d,\frac{\epsilon}{2})$. And finally, if the pseudometric is induced by a seminorm, we occasionally write
$N(S,\Vert\cdot\Vert,\epsilon)$ or $D(S,\Vert\cdot\Vert,\epsilon)$.\\

We are mainly interested in the pseudometric space $(\Theta,d)$
with
$d(\theta,\theta'):=\Vert(\phi_\theta(x_1)-\phi_{\theta'}(x_1),...,\phi_\theta(x_n)-\phi_{\theta'}(x_n))^T\Vert_2$,
where $x:=(x_1,...,x_n)\in\mathfrak{X}^n$ for an arbitrary set 
$\mathfrak{X}$ and
$\{\phi_\theta:{\mathfrak{X}}\to\mr:\theta\in\Theta\}$ is a set of
functions and we define
$X_\theta(x):=\sum_{i=1}^n\epsilon_i\phi_\theta(x_i)$ for simplicity. The
choice for the pseudometric $d$ is motivated by the fact that $\{X_\theta(x)
:\theta\in\Theta\}$ is
sub-Gaussian with respect to $d$ due to Hoeffding's
inequality, that is
\begin{equation*}
  \mpr(|X_\theta-X_{\theta'}|>u)\leq 2
  \exp\left(-\frac{u^2}{2d(\theta,\theta')}\right)~\forall
  \theta,\theta'\in\Theta.
\end{equation*}
In other words, the tail behavior is as for Gaussian
processes.

\paragraph{Main Results:} We derive upper bounds for the quantity
\eqref{eq.quantity} under three different sets of assumptions. We are not aware of equally
sharp bounds in the literature.\\

In Section~\ref{sec.mainresult}, we derive a bound under the assumption that $\{\phi_\theta:{\mathfrak{X}}\to\mr:\theta\in\Theta\}$ has a certain
contraction property:
\begin{theorem}
\label{lemma.mainresult}
If there exists a function $
A:\mathfrak{X}^n\to\mr$ fulfilling
\begin{equation}
\label{eq.assumptiondud}
  d(\theta,\theta')\leq\sqrt n
  A(x)\Vert\theta-\theta'\Vert_2~\forall\theta,\theta'\in\Theta
\end{equation}
then there is a universal constant K such that for $\theta_0\in\Theta$ arbitrary
\begin{equation}
\label{eqmainresults1}
  \me\sup_{\theta\in\Theta}|X_\theta(x)|\leq \me|X_{\theta_0}(x)|+ K\sqrt{n\log{(l+1)}}\log(n+1)A(x)M.
\end{equation}
\end{theorem}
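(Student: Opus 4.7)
The plan is to combine discrete chaining with Carl's entropy bound for the $\ell_1$--ball in the $\ell_2$--metric, the latter being the source of the $\sqrt{\log(l+1)}$ factor rather than the $\sqrt l$ one would get from a crude volumetric estimate. Since $\sup_\theta|X_\theta(x)|\leq|X_{\theta_0}(x)|+\sup_\theta|X_\theta(x)-X_{\theta_0}(x)|$, it suffices to bound the expected sup of the differences. Set $D:=2\sqrt n\,A(x)\,M$: by~\eqref{eq.assumptiondud} together with $\Vert\theta-\theta'\Vert_2\leq\Vert\theta-\theta'\Vert_1\leq 2M$, this is an upper bound on the $d$--diameter of $\Theta$. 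For $j=0,1,\dots,J$ with $J:=\lceil\log_2 n\rceil$, set $\epsilon_j:=D/2^j$, let $T_j\subset\Theta$ be a minimal $\epsilon_j$--net of $(\Theta,d)$ with $T_0=\{\theta_0\}$, and let $\pi_j\colon\Theta\to T_j$ denote nearest--point projection.

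Apply the standard telescope
\begin{equation*}
X_\theta-X_{\theta_0}=\sum_{j=1}^J\bigl(X_{\pi_j(\theta)}-X_{\pi_{j-1}(\theta)}\bigr)+\bigl(X_\theta-X_{\pi_J(\theta)}\bigr).
\end{equation*}
The sub-Gaussianity of $\{X_\theta\}$ in the metric $d$ (Hoeffding, as recorded in the introduction) and the maximal inequality applied to the at most $|T_{j-1}|\,|T_j|\leq N(\Theta,d,\epsilon_j)^2$ sub-Gaussian pairs, each with increment parameter $\leq 2\epsilon_{j-1}$, yield
\begin{equation*}
\me\sup_\theta\bigl|X_{\pi_j(\theta)}-X_{\pi_{j-1}(\theta)}\bigr|\leq K_1\,\epsilon_{j-1}\sqrt{\log N(\Theta,d,\epsilon_j)}.
\end{equation*}
Now~\eqref{eq.assumptiondud} gives $N(\Theta,d,\epsilon)\leq N(\Theta,\Vert\cdot\Vert_2,\epsilon/(\sqrt n\,A(x)))$, and Carl's estimate~\cite{Carl85} for the scaled $\ell_1$--ball in the $\ell_2$--metric provides $\log N(\Theta,\Vert\cdot\Vert_2,\delta)\leq C\,M^2\log(l+1)/\delta^2$. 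Using $\epsilon_{j-1}=2\epsilon_j$, each chain level is thus bounded by $K_2\,M\,A(x)\sqrt{n\log(l+1)}$ independently of $j$, so summing over the $J\sim\log(n+1)$ levels yields the advertised main term.

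The residual is controlled deterministically by Cauchy--Schwarz:
\begin{equation*}
|X_\theta-X_{\pi_J(\theta)}|\leq\sum_{i=1}^n|\phi_\theta(x_i)-\phi_{\pi_J(\theta)}(x_i)|\leq\sqrt n\,d(\theta,\pi_J(\theta))\leq\sqrt n\,\epsilon_J\leq 2A(x)M,
\end{equation*}
which is absorbed in the main term. I expect the principal obstacles to be (i)~invoking Carl's entropy bound cleanly --- this is what replaces the volumetric $\sqrt l$ by $\sqrt{\log(l+1)}$ and thus governs the asymptotics in $l$ --- and (ii)~tuning the truncation depth $J\sim\log n$ so that the chained contribution and the deterministic residual balance into~\eqref{eqmainresults1}. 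The $\log(n+1)$ factor is intrinsic to discrete entropy chaining and can be removed only via the more delicate majorizing--measure arguments taken up later in the paper.
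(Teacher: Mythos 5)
Your proposal is correct and follows essentially the same route as the paper: Hoeffding sub-Gaussianity, the Maurey/Carl empirical-method entropy estimate for the $\ell_1$-ball (source of the $\sqrt{\log(l+1)}$), and a chain truncated at scale of order $A(x)M/\sqrt n$ whose residual is controlled by the deterministic Lipschitz bound $|X_\theta-X_{\theta'}|\leq\sqrt n\,d(\theta,\theta')$, with the $\log(n+1)$ arising from the $\sim\log n$ dyadic levels. The only difference is presentational: you carry out the discrete chaining by hand, whereas the paper packages it into a modified Dudley entropy integral (Lemma~\ref{lemmaentropy1}) whose nonzero lower limit of integration plays exactly the role of your truncation depth $J$.
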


In the linear case, the $\log(n+1)$ in \eqref{eqmainresults1} can be
omitted and the contraction property \eqref{eq.assumptiondud}
can be relaxed. This is stated in the following theorem we prove in Section~\ref{sec.lindud}:
\begin{theorem}
\label{lemma.lindud}
Let $ \psi_j:\mathfrak{X}\to\mr$ be arbitrary functions for
$j=1,...,l$. If $ \phi_\theta(x_i)=\sum_{j=1}^l\psi_j(x_i)\theta_j $
and if $A:\mathfrak{X}^n\to\mr$ fulfills
\begin{equation*}
\label{eqfrugalcondition}
  d(\theta,0)\leq\sqrt n
  A(x)M~\forall\theta\in\Theta
\end{equation*}
there is a universal constant $K$ such that
\begin{equation*}
  \me \sup_{\theta\in\Theta}|X_\theta(x)|\leq K \sqrt{n\log(l+1)}A(x)M.
\end{equation*}
\end{theorem}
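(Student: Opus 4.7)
The plan is to exploit the linearity of $\phi_\theta$ in $\theta$ to collapse the supremum over the whole $\ell_1$-ball $\Theta$ to a maximum over only $2l$ extreme points, so that chaining is not needed at all and only a single union bound is required.

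Concretely, I would first expand
\[
  X_\theta(x)\;=\;\sum_{i=1}^n\epsilon_i\sum_{j=1}^l\psi_j(x_i)\theta_j\;=\;\sum_{j=1}^l\theta_j Y_j,\qquad Y_j\;:=\;\sum_{i=1}^n\epsilon_i\psi_j(x_i).
\]
Since $\theta\mapsto X_\theta(x)$ is linear and $\Theta=\{\theta\in\mr^l:\|\theta\|_1\leq M\}$ is the convex hull of the $2l$ vertices $\{\pm Me_j:j=1,\ldots,l\}$, H\"older's inequality (equivalently, ``a linear functional on a polytope attains its maximum at a vertex'') gives the identity
\[
  \sup_{\theta\in\Theta}|X_\theta(x)|\;=\;M\max_{1\leq j\leq l}|Y_j|.
\]

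Second, I would turn the hypothesis into a uniform bound on the sub-Gaussian parameter of each $Y_j$. Plugging $\theta=Me_j$ into $d(\theta,0)\leq\sqrt n A(x)M$ immediately yields $\sqrt{\sum_{i=1}^n\psi_j(x_i)^2}\leq\sqrt n A(x)$ for every $j$, so by Hoeffding's inequality each $Y_j$ is centred and sub-Gaussian with variance proxy at most $nA(x)^2$.

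Third, the classical maximal inequality for $l$ centred sub-Gaussian variables with common parameter $\sigma$, namely $\me\max_j|Y_j|\leq\sqrt{2\log(2l)}\,\sigma$, combined with the two previous steps gives
\[
  \me\sup_{\theta\in\Theta}|X_\theta(x)|\;\leq\;M\sqrt{2\log(2l)}\sqrt n A(x)\;\leq\;K\sqrt{n\log(l+1)}\,A(x)M,
\]
which is the desired bound. I expect no real obstacle in this argument, and that is precisely the structural content of the statement: linearity reduces the metric-entropy integral to the trivial cardinality estimate $\log D\leq\log(2l)$, which is why the extra $\log(n+1)$ factor of Theorem~\ref{lemma.mainresult} drops out and why the full Lipschitz-type hypothesis~\eqref{eq.assumptiondud} can be weakened to the one-point condition $d(\theta,0)\leq\sqrt n A(x)M$.
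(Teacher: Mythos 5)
Your proposal is correct and follows essentially the same route as the paper: the key step in both is that linearity of $\theta\mapsto X_\theta(x)$ collapses the supremum over the $\ell_1$-ball to a maximum over its finitely many extreme points (the paper uses the $l+1$ points $\{Me_1,\ldots,Me_l,0\}$, you use $\{\pm Me_j\}$), after which only the trivial cardinality bound is needed. The sole cosmetic difference is the finish: the paper plugs $D(\Theta',d,\epsilon)\leq l+1$ into its entropy-integral lemma, whereas you invoke the standard finite sub-Gaussian maximal inequality directly --- these are the same estimate in substance, so no further comment is needed.
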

For strongly correlated covariables, we can improve on these bounds. We show this in Section~\ref{sec.appmaj} with the help of Majorizing
Measures. To state the result, we let $X'\in\mr^{n\times l'}$, $X''\in\mr^{n\times l''}$. Furthermore, we denote the $i$-th row of $X'$ ($X''$ resp.) by
$x'_i~(x''_i$ resp.), the columns
by $y'_i~(y''_i$ resp.) and we set $\theta=(\theta',\theta'')$. We then impose
the usual normalization on the matrices, that is $\Vert y'_i\Vert_2=\Vert
y''_i\Vert_2=\sqrt n$ and state the following result: 
\begin{theorem}
\label{lemma.mainmm} 
 Let $g:\mr^2\to \mr$ be a contraction w.r.t. the Euclidean metric. If
 there are orthogonal matrices $R',R''$ such that for all $i$
 \begin{equation}
\label{eq.ellipsoidalcon}
   \sum_{j=1}^nj\frac{(R'y'_i)^2_j}{n},~
   \sum_{j=1}^nj\frac{(R''y''_i)^2_j}{n}\leq 1
 \end{equation}
then there is  a universal constant $K$ such that for $\theta_0\in\Theta$ arbitrary
  \begin{equation*}\label{eq.majmain}
    \me \sup_{\theta\in
      \Theta}\left|\sum_{i=1}^n\epsilon_ig((x_i')^T\theta',(x_i'')^T\theta'')\right|\leq
    \me
    \left|\sum_{i=1}^n\epsilon_ig((x_i')^T\theta_0',(x_i'')^T\theta_0'')\right|+K\sqrt{n\log(
    n+1)}M.
  \end{equation*}
\end{theorem}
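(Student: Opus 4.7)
The plan is to apply the majorizing measures bound from Section~\ref{sec.appmaj} to the Rademacher process $X_\theta:=\sum_{i=1}^n\epsilon_i g((x_i')^T\theta',(x_i'')^T\theta'')$ and to control $\gamma_2$ of $(\Theta,d)$ via the ellipsoidal structure coming from~\eqref{eq.ellipsoidalcon}. Because $g$ is a contraction on $\mr^2$, each increment $X_\theta-X_{\tilde\theta}$ is a Rademacher sum whose squared coefficients sum to at most $\|X'(\theta'-\tilde\theta')\|_2^2+\|X''(\theta''-\tilde\theta'')\|_2^2$, so Hoeffding's inequality yields sub-Gaussianity with respect to a pseudometric $d$ dominated by this right-hand side.

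Next, introduce the map $\Psi(\theta):=(X'\theta',X''\theta'')\in\mr^{2n}$. The inequality $d(\theta,\tilde\theta)\leq\|\Psi(\theta)-\Psi(\tilde\theta)\|_2$ means that any admissible partition of $\Psi(\Theta)$ in Euclidean distance pulls back to one of $(\Theta,d)$ with no larger diameters, and hence $\gamma_2(\Theta,d)\leq\gamma_2(\Psi(\Theta),\|\cdot\|_2)$. The orthogonal block transformation $R'\oplus R''$ preserves Euclidean distances, and by~\eqref{eq.ellipsoidalcon} the rotated columns $R'y_i'$ and $R''y_j''$ all lie in the Hilbert ellipsoid
\begin{equation*}
E:=\Bigl\{u\in\mr^n:\sum_{j=1}^n j u_j^2/n\leq 1\Bigr\}
\end{equation*}
with semi-axes $a_j=\sqrt{n/j}$. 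Combined with $\|\theta'\|_1,\|\theta''\|_1\leq M$, the rotated image $(R'\oplus R'')\Psi(\Theta)$ therefore sits inside $(ME)\times(ME)$, itself an ellipsoid in $\mr^{2n}$ whose squared semi-axes sum to $2M^2 n\sum_{j=1}^n 1/j\leq KM^2 n\log(n+1)$.

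It then remains to invoke the classical majorizing measures bound $\gamma_2\lesssim\sqrt{\sum_j a_j^2}$ for Hilbert ellipsoids, which yields $\gamma_2(ME\times ME,\|\cdot\|_2)\leq KM\sqrt{n\log(n+1)}$. Feeding this into the Majorizing Measure Theorem of Section~\ref{sec.appmaj} produces $\me\sup_\theta|X_\theta-X_{\theta_0}|\leq KM\sqrt{n\log(n+1)}$, and the desired inequality follows from the triangle inequality $\me\sup_\theta|X_\theta|\leq\me|X_{\theta_0}|+\me\sup_\theta|X_\theta-X_{\theta_0}|$.

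The main obstacle is this ellipsoidal $\gamma_2$-estimate. The semi-axes $a_j=\sqrt{n/j}$ lie precisely on the borderline $a_j^2\propto 1/j$ for which Dudley's entropy integral diverges by an extra $\sqrt{\log n}$ factor, so the bound of Theorem~\ref{lemma.mainresult} would not be sharp enough here. Obtaining the correct rate requires a genuine majorizing-measure construction adapted to the ellipsoidal shape—essentially quantising the $j$-th coordinate at scale $a_j$ while controlling the tail through $\sum_{j>k}a_j^2$—which is precisely what makes the final bound independent of $l'$ and $l''$.
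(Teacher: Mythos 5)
Your proposal is correct in substance and shares the paper's overall architecture---contraction of $g$ plus Hoeffding gives sub-Gaussianity with respect to a pseudometric dominated by $\Vert\Psi(\theta)-\Psi(\tilde\theta)\Vert_2$ for $\Psi(\theta)=(X'\theta',X''\theta'')$, and assumption \eqref{eq.ellipsoidalcon} together with $\Vert\theta'\Vert_1,\Vert\theta''\Vert_1\leq M$ places the rotated image inside a product ellipsoid with semi-axes of order $M\sqrt{n/j}$ (the paper phrases this via $T'=M\cdot\sconv\{y'_1,\dots,y'_{l'}\}$)---but the key lemma you invoke is genuinely different. You bound the generic-chaining functional of the ellipsoid by $(\sum_j a_j^2)^{1/2}$, so the logarithm enters through the harmonic sum $\sum_{j\leq n}1/j$. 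The paper instead applies its Ellipsoid Theorem (Lemma~\ref{lemma.talmm2}), which controls a \emph{different} functional---note that the paper's $\gamma_2$ is the $\epsilon$-weighted integral $(\int\epsilon\log(1/\mu(B))\,d\epsilon)^{1/2}$, not the quantity that directly majorizes sub-Gaussian suprema---by $\sup_i a_i\sqrt i\asymp M\sqrt n$ with no logarithm at all; the $\sqrt{\log(n+1)}$ is then paid in the H\"older step converting $\gamma_1$ (what Lemma~\ref{lemma.talmm1} actually requires) into $\gamma_2$, via $\int_M^{4nM}\epsilon^{-1}d\epsilon$. That conversion is also why the paper first passes to a maximal $M$-separated subset $S$ of the ellipsoid, at an additive cost of $\sqrt n M$ by Cauchy--Schwarz: the lower limit of the H\"older integral must be bounded away from zero. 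Your route avoids both the discretization and the H\"older step, at the price of citing an ellipsoid estimate that is not among the lemmas the paper supplies (it is classical, following from the Gaussian width $\me\sup_{t\in E}\langle g,t\rangle\leq(\sum_j a_j^2)^{1/2}$ together with the majorizing measure theorem, or from the direct admissible-sequence construction you sketch). Two points of care: your ``$\gamma_2$'' is the paper's ``$\gamma_1$'' up to equivalence, so your estimate cannot be fed into Lemma~\ref{lemma.talmm2} as written; and the passage from $\me\sup_\theta(X_\theta-X_{\theta_0})$, which is what the majorizing measure bound controls, to $\me\sup_\theta|X_\theta|-\me|X_{\theta_0}|$ needs the symmetry argument of Proposition~\ref{lemma.symmetric} rather than the triangle inequality alone.
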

So, the factor $\sqrt{n\log(l+1)}\log(n+1)$ in the bound \eqref{eqmainresults1} can be
replaced by $\sqrt{n\log(n+1)}$ in this case. The required correlation is expressed
by assumption \eqref{eq.ellipsoidalcon}: It means, that the columns of the matrices
$X'$ and $X''$
can be enveloped by small ellipsoids. The matrices $R'$ and $R''$ are the
transformations that bring these ellipsoids on the standard form.

\fancyhf{} 
\pagestyle{normal}


\section{Entropy Bounds}

In this part, we introduce entropy bounds and apply them to Rademacher processes. In the first section, we prove adapted versions of two classical
entropy results. The second and the third sections are devoted to the proofs
of Theorem~\ref{lemma.mainresult} and Theorem~\ref{lemma.lindud} and a simple example. 

\subsection{Refinement of Entropy Bounds}

Here, we introduce slightly modified versions of two classical
entropy bounds for empirical processes (see e.g. \cite{vdVaart00} Theorem
2.2.4 and Corollary 2.2.8). The modification is the lower bound for the integration. For convenience, we
give the proofs in detail, although they follow closely the ones given in
\cite{vdVaart00}.\\

Beforehand, we recall the definition of the Orlicz norm $\Vert X
\Vert_\Psi$ for a non-decreasing and convex function $\Psi$ with $\Psi(0)=0$:
\begin{equation*}
  \Vert X
\Vert_\Psi:=\inf\{ A>0:\me \Psi\left( \frac{|X|}{A} \right)\leq 1 \}.
\end{equation*}
We are then able to formulate and prove an important entropy bound:
\begin{lemma}
\label{lemmaentropy1}
Let $\Psi:\mr\to\mr$ be a convex, non-decreasing and non-constant function
with $\Psi(0)=0$ and 
\begin{equation*}
  \limsup_{x,y\to\infty}\frac{\Psi(x)\Psi(y)}{\Psi(cxy)}<\infty
 \end{equation*}
for a constant $c$. Define $\Psi(\infty):=\infty$,
$\Psi^{-1}(y):=\sup\{x:\Psi(x)\leq y\}$ and assume
$\Psi^{-1}(1)>0$. Furthermore, let $\{X_t:t\in T \}$ be a
stochastic process with
\begin{equation*}
  \Vert X_s-X_t\Vert_\Psi \leq Cd(s,t)~\forall s,t\in T
\end{equation*}
and
\begin{equation}
\label{eq.addcondition}
  |X_s-X_t|\leq \alpha d(s,t)~\forall s,t\in T
\end{equation}
for a pseudometric d and positiv constants $C$ and $\alpha$. Then there are universal functions
$K\equiv K(C,\Psi)$ and $U\equiv U(C,\Psi,\alpha)$ such that for all $0<\eta\leq \delta$
\begin{equation}
\label{eqmaxin}
 \Vert\sup_{d(s,t)\leq\delta}|X_s-X_t| \Vert_\Psi\leq K\left( \int_{\frac{\eta}{U}}^{\frac{\eta}{2}}\Psi^{-1}(D(T,d,\epsilon))d\epsilon+\delta\Psi^{-1}(D^2(T,d,\eta))\right).
\end{equation}
\end{lemma}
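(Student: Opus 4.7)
The plan is a classical chaining argument in the spirit of \cite{vdVaart00}, modified to produce the refined integration limits $[\eta/U,\eta/2]$ and the separate $\delta$-term. The main ingredients are: the maximal Orlicz inequality $\Vert\max_{i\leq m}|Y_i|\Vert_\Psi\leq K_1\Psi^{-1}(m)\max_i\Vert Y_i\Vert_\Psi$, a standard consequence of the convexity of $\Psi$; the growth hypothesis $\limsup \Psi(x)\Psi(y)/\Psi(cxy)<\infty$, which keeps constants from blowing up when the maximal inequality is iterated across levels; and the deterministic Lipschitz bound \eqref{eq.addcondition}, which is the new ingredient relative to the classical result and is what allows the chain to terminate at a scale $\eta/U$ rather than at $0$.

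First I fix geometric scales $\eta_k := 2^{-k}\eta$ for $k=0,1,\dots,K$, where $K$ is the largest integer with $\eta_K\geq \eta/U$ and $U=U(C,\Psi,\alpha)$ is a constant to be fixed at the end. At each scale I choose a maximal $\eta_k$-separated subset $T_k\subset T$, so $|T_k|\leq D(T,d,\eta_k)$, together with nearest-point maps $\pi_k:T\to T_k$ satisfying $d(t,\pi_k t)\leq \eta_k$; the nets are arranged hierarchically so that $\pi_{k-1}=\pi_{k-1}\circ \pi_k$. For $s,t\in T$ with $d(s,t)\leq \delta$, I apply the telescoping identity
\begin{equation*}
X_s-X_t \;=\; (X_{\pi_0 s}-X_{\pi_0 t}) \;+\; \sum_{k=1}^{K}\bigl[(X_{\pi_k s}-X_{\pi_{k-1}s})-(X_{\pi_k t}-X_{\pi_{k-1}t})\bigr] \;+\; (X_s-X_{\pi_K s}) - (X_t-X_{\pi_K t}).
\end{equation*}

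Taking $\Vert\cdot\Vert_\Psi$ of the supremum in $(s,t)$ and using the triangle inequality, I would control the three groups of terms separately. The base pair runs over at most $D(T,d,\eta)^2$ values in $T_0\times T_0$, each at $d$-distance $\leq \delta+2\eta\leq 3\delta$, so the maximal inequality produces the contribution $\leq K_2\delta\,\Psi^{-1}(D^2(T,d,\eta))$ (this is the only place where $D^2$ survives). The nesting ensures that each $k$-th chain level has at most $D(T,d,\eta_k)$ distinct links, each of Orlicz norm $\leq 3C\eta_k$, so the maximal inequality yields $\leq K_3\eta_k\,\Psi^{-1}(D(T,d,\eta_k))$; summed over $k$ and compared to a Riemann sum, this is dominated by a constant times $\int_{\eta/U}^{\eta/2}\Psi^{-1}(D(T,d,\epsilon))\,d\epsilon$. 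Finally, by \eqref{eq.addcondition} the deterministic tail is bounded pointwise by $\alpha\eta_K\leq \alpha\eta/U$, contributing an Orlicz norm $\leq 2\alpha\eta/(U\Psi^{-1}(1))$; choosing $U$ large enough as a function of $C,\Psi,\alpha$ absorbs this tail into the last slice of the chain integral.

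The main obstacle is the calibration of $U$. The lower integration limit $\eta/U$ is forced by the need to stop the chain at a scale where the deterministic Lipschitz residual $\alpha\eta/U$ fits inside the last integral piece, and $U$ must be chosen uniformly in $\eta,\delta$ and $T$ --- this is why $U$ depends only on $C,\Psi,\alpha$. The complementary nesting trick $\pi_{k-1}=\pi_{k-1}\circ\pi_k$, which reduces the number of distinct chain links on level $k$ from $D^2$ to $D$, is what keeps the integrand as $\Psi^{-1}(D)$ rather than $\Psi^{-1}(D^2)$ and confines the $D^2$ penalty to the separate base term $\delta\,\Psi^{-1}(D^2(T,d,\eta))$.
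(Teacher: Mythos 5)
Your proposal is correct and follows essentially the same route as the paper's own proof: nested maximal separated nets at geometric scales from $\eta$ down to $\eta/U$, the maximal Orlicz inequality with hierarchical (parent-link) bookkeeping to keep $\Psi^{-1}(D)$ in the integrand and confine $\Psi^{-1}(D^2)$ to the coarse-level $\delta$-term, and the deterministic Lipschitz condition \eqref{eq.addcondition} to truncate the chain and calibrate $U=U(C,\Psi,\alpha)$. The only differences are cosmetic (direction of the telescoping and where exactly the $D^2$ pair-maximum is taken), so no further comment is needed.
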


Comparing this to \cite{vdVaart00}, note that we introduced the additional
condition \eqref{eq.addcondition}. This is to establish the lower integral
bound in the inequality \eqref{eqmaxin}.
\begin{pro}
We may assume that the covering numbers for
$\epsilon>\frac{\eta}{U}$ and the corresponding integral in \eqref{eqmaxin}
are finite since the inequality is trivial otherwise. We then fix $\eta\in\mr^+$ and $k\in\mn$ and construct
nested sets $T_0\subset T_1\subset...\subset T_{k+1}\subset T$ such that
for every $j\leq k+1$ $T_j$ is maximal w.r.t. $d(s,t)>\eta 2^{-j}$ for all $s,t\in T_j$.\\
According to the definition of covering numbers, it holds that $|T_j|\leq
D(T,d,\eta2^{-j})$. We will assume $U2^{-(k+1)}> 1$ ($U$ will be defined later) and hence finitely many
elements in every set, this will be justified later. Now, we will assign
each point $t_{j+1}\in T_{j+1}$ to a unique point $t_{j}\in T_{j}$ such that
$d(t_{j+1},t_j)\leq \eta 2^{-j}$. In this way, we define for all $t_{k+1}\in T_{k+1}$ chains
$t_{k+1}\mapsto ... \mapsto t_0\in T_0$ and use the notation $c(t_{k+1}):=\{t_{k+1},...,t_0\}$.\\
Let $s_{k+1},t_{k+1}\in T_{k+1}$. We then get for elements of these chains 
\begin{align*}
|(X_{s_{k+1}}-X_{s_0})-(X_{t_{k+1}}-X_{t_0})|&=|\sum_{j=0}^{k}(X_{s_{j+1}}-X_{s_j})-\sum_{j=0}^{k}(X_{t_{j+1}}-X_{t_j})|\\
&\leq
\sum_{j=0}^{k}|X_{s_{j+1}}-X_{s_j}|+\sum_{j=0}^{k}|X_{t_{j+1}}-X_{t_j}|\\
&\leq2\sum_{j=0}^{k}\max\{|X_u-X_v|:u\in T_{j+1},v\in T_{j}\cap
c(u)\}.
\end{align*}
Applying Lemma 2.2.2
of \cite{vdVaart00}, we find a constant K depending on $\Psi$ only such that 
\begin{align*}
 &
 \Vert\max|(X_{s_{k+1}}-X_{s_0})-(X_{t_{k+1}}-X_{t_0})|\Vert_\Psi\\
\leq& 2\sum_{j=0}^{k}\Vert\max\{|X_u-X_v|:u\in T_{j+1},v\in T_{j}\cap
c(u)\}\Vert_{\Psi}\\
\leq& 2K\sum_{j=0}^{k}\Psi^{-1}(|T_{j+1}|)\max\{\Vert X_u-X_v \Vert_\Psi :u\in T_{j+1},v\in T_{j}\cap
c(u)\}\\
\leq& 2KC\sum_{j=1}^{k+1}\Psi^{-1}(D(T,d,\eta2^{-j}))\eta2^{-j+1}\\
\leq& 8KC\int_{\eta2^{-(k+2)}}^{\frac{\eta}{2}}\Psi^{-1}(D(T,d,\epsilon))d\epsilon.
\end{align*}
In the first line, the maximum is taken over all $s_{k+1},t_{k+1}\in
T_{k+1}$ and their associated points in $T_0$. We then note that for $\delta\geq\eta$
\begin{align*}
 &\Vert\max\{|X_s-X_t|:s,t\in
T_{k+1}:d(s,t)\leq\delta\} \Vert_\Psi\\
\leq &\Vert\max\{|(X_s-X_{s_0})-(X_t-X_{t_0})|:s,t\in
T_{k+1}:d(s,t)\leq\delta\} \Vert_\Psi\\
+&\Vert\max\{|X_{s_0}-X_{t_0}|:s_0,t_0\in
T_{0},s,t\in
T_{k+1},s_0\in c(s),t_0\in c(t)  \} \Vert_\Psi.
\end{align*}
The first term on the r.h.s. of the last display is bounded according to what we have done above. The second term may be rewritten using
\begin{align*}
  |X_{s_0}-X_{t_0}|\leq& |(X_{s_0}-X_{s_{k+1}})-(X_{t_0}-X_{t_{k+1}})|+|X_{s_{k+1}}-X_{t_{k+1}}|.
\end{align*}
Here, we assign to each $s_0\in T_{0}$ and each $t_0\in T_{0}$ a fixed
$s_{k+1}\in T_{k+1}$, $t_{k+1}\in T_{k+1}$ respectively, such that $s_0\in
c(s)$ and $t_0\in c(t)$. We demand furthermore, that $d(s_{k+1},t_{k+1})\leq\delta$. This yields together with Lemma~2.2.2 of \cite{vdVaart00}
\begin{align*}
 &\Vert\max\{|X_s-X_t|:{s,t\in T_{k+1},d(s,t)\leq\delta}\} \Vert_\Psi\\ 
\leq&16KC\int_{\eta2^{-(k+2)}}^{\frac{\eta}{2}}\Psi^{-1}(D(T,d,\epsilon))d\epsilon+\Vert\max|X_{s_{k+1}}-X_{t_{k+1}}|\Vert_\Psi\\
\leq&16KC\int_{\eta2^{-(k+2)}}^{\frac{\eta}{2}}\Psi^{-1}(D(T,d,\epsilon))d\epsilon+K\Psi^{-1}(D^2(T,d,\eta))\max\Vert
X_{s_{k+1}}-X_{t_{k+1}}\Vert_\Psi
\\
\leq&16KC\int_{\eta2^{-(k+2)}}^{\frac{\eta}{2}}\Psi^{-1}(D(T,d,\epsilon))d\epsilon+KC\delta\Psi^{-1}(D^2(T,d,\eta)).
\end{align*}
The maximum in the second line is taken as described above. We then note that
\begin{align*}
\Vert\sup_{d(s,t)\leq\delta}|X_s-X_t| \Vert_\Psi=& \Vert\sup_{d(s,t)\leq\delta}|(X_{s}-X_{s^*})-(X_{t}-X_{t^*})+(X_{s^*}-X_{t^*})|
 \Vert_\Psi  \\
\leq& 2\Vert\sup_{s\in T}|
X_{s}-X_{s^*}|\Vert_\Psi+ \Vert \max\{|X_s-X_t|:{s,t\in T_{k+1},d(s,t)\leq 3\delta}\}\Vert_\Psi 
\end{align*}
where we define $s^*:=\argmin_{s'\in T_{k+1}}d(s',s)$ and $t^*:=\argmin_{t'\in
  T_{k+1}}d(t',t)$ and use 
\begin{equation*}
  d(s^*,t^*)\leq d(s^*,s)+d(s,t)+d(t,t^*)\leq 3\delta.
\end{equation*}
We find moreover
\begin{align*}
  \Vert\sup_{s\in T}|
  X_{s}-X_{s^*}|\Vert_\Psi&=\inf\{A>0:\me\Psi(\sup_{s\in T}|
X_{s}-X_{s^*}|/A) \leq 1\}\\
&\leq \frac{\alpha\eta 2^{-(k+1)}}{\Psi^{-1}(1)}.
\end{align*}
We may assume  w.l.o.g. that $T$ is not empty and $C,K>0$. So there is a
$k_0\in\mn$ (depending only on $\Psi$ and $\alpha$) such that
$\frac{\alpha 2^{-(k_0+1)}}{\Psi^{-1}(1)} \leq \frac{K}{4}\Psi^{-1}(1)$. Then, 
\begin{align*}
 \label{eqentropyu} \frac{\alpha\eta 2^{-(k_0+1)}C}{\Psi^{-1}(1)}&\leq KC\frac{\eta}{4}\Psi^{-1}(1)\\
  &\leq KC\frac{\eta}{2}(1-2^{-(k_0+1)})\Psi^{-1}(1)\\
  &\leq KC \int_{\eta2^{-(k_0+2)}}^{\frac{\eta}{2}}\Psi^{-1}(D(T,d,\epsilon))d\epsilon.
\end{align*}
We define $U:=2^{k_0+2}$ to conclude the proof.
\end{pro}

Because we often do not need the generality of Lemma~\ref{lemmaentropy1}, we
derive in the following a result for the important special case of sub-Gaussian processes:

\begin{lemma} \label{lemmaentropy2}Let $\{X_t:t\in T \}$ be a sub-Gaussian process w.r.t. a
  pseudometric d such that
\begin{equation*}
  |X_s-X_t|\leq \alpha d(s,t)~\forall s,t\in T
\end{equation*}
for a constant $\alpha$. Then there exists a function $U\equiv
U(\alpha)$ and a universal constant $K$ such that for all $\delta > 0$ and $t_0\in T$ arbitrary
\begin{equation}
\label{eqmaxin2}
  \me\sup_{t:d(t,t_0)\leq\delta}|X_t| \leq \me |X_{t_0}|+ K \int_{\frac{\delta}{U}}^{\frac{\delta}{2}}\sqrt{\log(1+D(T,d,\epsilon))}d\epsilon.
\end{equation}
\end{lemma}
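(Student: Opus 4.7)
The plan is to reduce Lemma~\ref{lemmaentropy2} to Lemma~\ref{lemmaentropy1} applied with the Orlicz function $\Psi(x):=e^{x^2}-1$, whose inverse $\Psi^{-1}(y)=\sqrt{\log(1+y)}$ matches the integrand in \eqref{eqmaxin2}. First I would verify that this $\Psi$ meets the hypotheses of Lemma~\ref{lemmaentropy1}: it is convex, non-decreasing, non-constant, $\Psi(0)=0$, $\Psi^{-1}(1)=\sqrt{\log 2}>0$, and the $\limsup$ condition is a standard fact for the sub-Gaussian Orlicz function. The sub-Gaussian tail assumption converts, via the standard tail-to-Orlicz-norm argument (cf.\ \cite{vdVaart00}), into $\|X_s-X_t\|_\Psi\leq C d(s,t)$ for a universal $C$, while the Lipschitz hypothesis $|X_s-X_t|\leq \alpha d(s,t)$ is precisely the additional condition \eqref{eq.addcondition}.

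Applying Lemma~\ref{lemmaentropy1} with $\eta=\delta$ then gives
\[
\left\|\sup_{d(s,t)\leq\delta}|X_s-X_t|\right\|_\Psi \leq K_1\int_{\delta/U_1}^{\delta/2}\sqrt{\log(1+D(T,d,\epsilon))}\,d\epsilon + K_1\delta\sqrt{\log(1+D^2(T,d,\delta))},
\]
with $K_1$ universal and $U_1\equiv U_1(\alpha)$. Specialising to $s=t_0$ on the left bounds $\|\sup_{t:d(t,t_0)\leq\delta}|X_t-X_{t_0}|\|_\Psi$ by the same right-hand side. The core step is then to absorb the stray term $K_1\delta\sqrt{\log(1+D^2(T,d,\delta))}$ into the integral. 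For this I would use $1+D^2\leq(1+D)^2$ to get $\sqrt{\log(1+D^2)}\leq\sqrt{2}\sqrt{\log(1+D)}$, and the monotonicity of $D(T,d,\cdot)$ to obtain
\[
\int_{\delta/U_1}^{\delta/2}\sqrt{\log(1+D(T,d,\epsilon))}\,d\epsilon\geq\left(\tfrac{1}{2}-\tfrac{1}{U_1}\right)\delta\sqrt{\log(1+D(T,d,\delta))}.
\]
Enlarging $U_1$ to at least $4$ (which only weakens Lemma~\ref{lemmaentropy1}'s bound and hence remains valid) makes the residual at most a universal multiple of the integral.

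To finish, I would convert the Orlicz bound into an expectation bound by Jensen's inequality (applied to the convex $\Psi$), giving $\me|Y|\leq\Psi^{-1}(1)\|Y\|_\Psi$, and then invoke the triangle inequality $\sup_{t:d(t,t_0)\leq\delta}|X_t|\leq|X_{t_0}|+\sup_{t:d(t,t_0)\leq\delta}|X_t-X_{t_0}|$ before taking expectations. This produces the claimed inequality with a fresh universal $K$ and $U\equiv U(\alpha):=\max(U_1(\alpha),4)$. The main subtlety is the absorption of the $D^2$-residual, which relies on both the enlargement of $U_1$ and the monotonicity of the packing numbers; everything else is routine bookkeeping.
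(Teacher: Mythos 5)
Your proposal is correct and follows essentially the same route as the paper: apply Lemma~\ref{lemmaentropy1} with $\Psi(x)=e^{x^2}-1$ and $\eta=\delta$, absorb the $\delta\Psi^{-1}(D^2)$ term into the integral via $\sqrt{\log(1+D^2)}\leq\sqrt 2\sqrt{\log(1+D)}$ together with monotonicity of $D$ and $U\geq 4$, convert the Orlicz norm to an expectation, and finish with the triangle inequality against $X_{t_0}$. The only cosmetic difference is that you make the absorption step and the Jensen conversion explicit where the paper leaves them terse.
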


\begin{pro} We apply Lemma~\ref{lemmaentropy1} to $\Psi(x):=e^{x^2}-1$. The function $\Psi$ is convex and increasing and
$\Psi(0)=0$. It holds that
\begin{equation*}
  \limsup_{x,y\to\infty}\frac{\Psi(x)\Psi(y)}{\Psi(xy)}<\infty
\end{equation*}
and 
\begin{equation*}
  \Vert X_s-X_t\Vert_\Psi\leq \sqrt 6 d(s,t)~\forall s,t\in T.
\end{equation*}
So, the conditions of Lemma~\ref{lemmaentropy1} are met. We then set $\eta=\delta$ in Lemma \ref{lemmaentropy1} and note that 
\begin{equation*}
  \Psi^{-1}(m^2)=\sqrt{\log{(1+m^2)}}\leq \sqrt{\log(1+m)^2}=\sqrt 2 \Psi^{-1}(m).
\end{equation*}
So there is a universal constant $K'$ such that (recall that $U\geq 4$,
cf. proof of Lemma~\ref{lemmaentropy1})
\begin{equation*}
 \Vert\sup_{\substack{s,t:d(s,t)\leq\delta}}|X_s-X_t| \Vert_\Psi\leq K' \int_{\frac{\delta}{U}}^{\frac{\delta}{2}}\sqrt{\log(1+D(T,d,\epsilon))}d\epsilon.
\end{equation*}
Since $\sqrt{\log 2}\cdot\me |X| \leq \Vert X\Vert_\Psi$ for any
random variable $X$, there is a constant K such that 
\begin{equation*}
\label{eqmaxzwischen}
  \me\sup_{\substack{s,t:d(s,t)\leq\delta}}|X_s-X_t| \leq K \int_{\frac{\delta}{U}}^{\frac{\delta}{2}}\sqrt{\log(1+D(T,d,\epsilon))}d\epsilon.
\end{equation*}
We conclude the proof by noting that for any $t_0$
\begin{align*}
 \me\sup_{t:d(t,t_0)\leq\delta}|X_t|-\me|X_{t_0}|
\leq\me\sup_{\substack{s,t:d(s,t)\leq\delta}}|X_s-X_{t}|.
\end{align*}
\end{pro}

\subsection{Proof of Theorem~\ref{lemma.mainresult}}
\label{sec.mainresult}
The proof of Theorem~\ref{lemma.mainresult} has two main ingredients: First,
the entropy bound of Lemma~\ref{lemmaentropy1} and second, some subtle entropy
estimates. For the entropy estimates,  we rely on ideas given in
Lemma~1 of \cite{Carl85}.

\begin{proof}[Proof of Theorem~\ref{lemma.mainresult}]
To simplify the notation, we set $X_\theta:=X_\theta(x)$ and $A:=A(x)$. We
then note that, as a consequence of Hoeffding's inequality, $\{X_\theta:\theta\in\Theta\}$ is
sub-Gaussian with respect to the pseudometric
\begin{equation*}
  d(\theta,\theta'):=\Vert(\phi_\theta(x_1)-\phi_{\theta'}(x_1),...,\phi_\theta(x_n)-\phi_{\theta'}(x_n))^T\Vert_2.
\end{equation*}
We find that 
\begin{equation}
\label{eq.mainsqrtn}
  |X_{\theta}-X_{\theta'}|\leq \sqrt{n} d(\theta,\theta')\leq n A\Vert \theta-\theta'\Vert_2~\forall\theta,\theta'\in\Theta.
\end{equation}
Now, we want to calculate the entropy linked with the stochastic
process and the pseudometric $d$. To this end, we define
\begin{equation*}
  V:=\{e_1,...,e_{2l}\}\subset \mr^l
\end{equation*}
using the notation $(e_i)_j:=\delta_{ij}$ for $i\leq l$, where $\delta_{ij}$ is the Kronecker symbol, and
$e_i:=-e_{2l-i+1}$ for $i>l$. So $\Theta$ is the set
$\{\theta\in\mr^l:\exists\lambda\in\mr^{2l},\Vert\lambda\Vert_1\leq
M,\theta=\sum_{i=1}^{2l}\lambda_ie_i \}$. We then fix a
$\lambda\in\mr^{2l}$ such that $\Vert\lambda\Vert_1\leq M$. Define
independent random variables $Y_1,...,Y_k\in V\cup\vec{0}$ with (following \cite{Carl85})
\begin{equation*}
  \mpr(Y_i=e_j)=\frac{|\lambda_j|}{M}~\forall i=1,...,k,j=1,...,2l  
\end{equation*}
and 
\begin{equation*}
  \mpr(Y_i=\vec 0)=1-\sum_{j=1}^{2l}\frac{|\lambda_j|}{M}.
\end{equation*}
We obtain 
\begin{equation*}
  \me Y_i=\frac{1}{M}\sum_{j=1}^{2l}|\lambda_j|e_j\in\Theta~\forall i.
\end{equation*}
Next, we set $\overline{Y}_k:=\frac{1}{k}\sum_{i=1}^{k}Y_i\in\Theta$. One
may check that 
\begin{align*}
  \me[d(M\overline{Y}_k,M\me Y_1)^2]
\leq\frac{4nA^2M^2}{k}
\end{align*}
using the contraction property \eqref{eq.assumptiondud}.
So, the distance of at least one realization of $M\overline{Y}_k$ to $M\me Y_1$
is smaller or equal to $2\sqrt\frac{n}{k} AM$. For the (at most $\binom{2l+k-1}{k}$) realizations of
$M\overline{Y}_k$ and $M\me
Y_1$ it holds that $\forall \theta\in \Theta~\exists\lambda:\Vert\lambda\Vert_1\leq
M,\theta=M\sum_{j=1}^{2l}\frac{|\lambda_j|}{M}e_j$. Hence, using Stirling's
inequalities, we get
\begin{equation*}
  N\left(\Theta,d,2\sqrt\frac{n}{k}AM\right)\leq\binom{2l+k-1}{k}\leq \left(e+\frac{2el}{k}\right)^k.
\end{equation*}
Therefore,
\begin{equation*}
  N(\Theta,d,\epsilon)\leq \left(e+\frac{el\epsilon^2}{2nM^2A^2}\right)^{\frac{4nM^2A^2}{\epsilon^2}+1}
\end{equation*}
when we choose $k:=\lceil\frac{4nM^2A^2}{\epsilon^2}\rceil$. Consequently,
\begin{equation*}
  D(\Theta,d,\epsilon)\leq \left(e+\frac{el\epsilon^2}{8nM^2A^2}\right)^{\frac{16nM^2A^2}{\epsilon^2}+1}.
\end{equation*}
We may now use Lemma~\ref{lemmaentropy1} and get for a universal constant
$K$ and a constant $U$ depending only on $\sqrt{n}$ (see
condition~\eqref{eq.addcondition}~and inequality~\eqref{eq.mainsqrtn})
\begin{equation*}
 \me\sup_{\theta\in\Theta}|X_\theta|- \me|X_{\theta_0}| \leq K\int_{\frac{\sqrt{n}AM}{U}}^{\sqrt n AM}\sqrt{\log{(1+D(\Theta,d,\epsilon))}}d\epsilon.
\end{equation*}
Regarding the last part of the proof of Lemma~\ref{lemmaentropy1} we find a
universal constant $V$ such that $U=\sqrt{n}V$. The results then follows by
a simple calculation.
\end{proof}

\subsection{Proof of Theorem~\ref{lemma.lindud} and an Example}
\label{sec.lindud}
In the linear case, we can get rid of one of the logarithms. This is
because we can transform the parameter space into a lower
dimensional one. We note that in the proof of this lemma, the lower bounds for the integrals in Lemma~\ref{lemmaentropy1} and
Lemma~\ref{lemmaentropy2} are not necessary. Additionally, no difficult
entropy estimates have to be made.

\begin{proof}[Proof of~Theorem~\ref{lemma.lindud}] Again, we set
  $X_\theta:=X_\theta(x)$ and $A:=A(x)$ and note that
\begin{equation*}
  \sup_{\theta\in\Theta}|X_\theta|=\sup_{\theta\in\Theta}|\theta^Ta|
\end{equation*}
with $a:=(\sum_{i=1}^n\epsilon_i\psi_1(x_i),...,\sum_{i=1}^n\epsilon_i\psi_l(x_i))^T\in\mr^l$. The
map $\theta\to|\theta^Ta|$ attains its maximum on $\Theta$ at $\theta_0$ where
$(\theta_0)_i:=M\delta_{ip}$ with $p$ such that $|a_p|\geq |a_m|$ for all $m=1,...,l$. So we have
\begin{equation*}
  \me\sup_{\theta\in\Theta}|X_\theta|=\me\sup_{\theta\in\Theta'}|X_\theta|
\end{equation*}
for $\Theta':=\{(M,0,...,0)^T,...,(0,...,0,M)^T,(0,...,0)^T\}$. As a consequence of Hoeffding's inequality,
$\{X_\theta:\theta\in\Theta'\}$ is sub-Gaussian with respect to the pseudometric
$d(\theta,\theta'):=\Vert(\phi_{\theta}(x_1)-\phi_{\theta'}(x_1),...,\phi_{\theta}(x_n)-\phi_{\theta'}(x_n))^T\Vert_2$
and it holds for all $\theta,\theta'$ that $d(\theta,0)\leq
\sqrt{n}M$. Hence, according to Lemma~\ref{lemmaentropy1}, we
get for a universal constant $K$
\begin{equation*}
  \me \sup_{\theta\in\Theta}|X_\theta|\leq K\int_{0}^{\sqrt n AM}\sqrt{\log{(1+D(\Theta',d,\epsilon))}}d\epsilon.
\end{equation*}
The result follows then using $D(\Theta',d,\epsilon)\leq|\Theta'|=l+1$.
\end{proof}

Finally, we give a simple application:
\begin{example}
  Let $X\in \mr ^{n\times l}$ be normalized such that
  the columns have Euclidean norm $\sqrt{n}$. Moreover, define
  $\vec{\epsilon}:=(\epsilon_1,...,\epsilon_n)$ with Rademacher variables $\epsilon_i$. Then, for
  $X_\theta:=\vec{\epsilon}~^TX\theta,~\theta\in\Theta=\{\theta\in\mr^l:\Vert\theta\Vert_1\leq M$, there is a universal constant K such that
\begin{equation*}
  \me\sup_{\theta\in\Theta}|X_\theta|\leq K\sqrt{n\log{(l+1)}}M.
\end{equation*}
\end{example}


\section{The Majorizing Measures Bound}

In this part, we recall the Majorizing Measures Bound and some
consequence such as the Ellipsoid Theorem. We then apply these tools to prove Theorem~\ref{lemma.mainmm}.\\

\subsection{Majorizing Measures}

Majorizing Measures are known to work well in situations where we have unit
balls of $p$-convex Banach spaces as index sets (see \cite{Guedon08} for
an example and \cite{Pisier89} or \cite{Lindenstrauss79} for the
definitions of $p$-convexity, $p$-type and related terms). Here, we recall
the most important bounds arising in this scope. For the proofs and more detailed introductions we refer to \cite{Rhee88}, \cite{Talagrand94} and
\cite{Talagrand96}.\\

We begin with a basic definition:
\begin{definition}
  Let (T,$\bar d$) be a metric space and $\beta>0$. We set 
  \begin{equation*}
    \gamma_\beta(T,\bar d):=\inf\left\lbrace\sup_{t\in T}
      \left(\int_0^\infty \epsilon^{\beta-1}\left(\log\frac{1}{\mu(B(\bar d,t,\epsilon))}\right)^{\frac{\beta}{2}}d\epsilon\right)^{\frac{1}{\beta}} \right\rbrace,
  \end{equation*}
where $B(\bar d,t,\epsilon)$ is the ball w.r.t. $\bar d$ around $t$ with radius $\epsilon$
and the infimum is taken over all probability
 measures $\mu$ on the Borel-$\sigma$-algebra of T.
\end{definition} 

We then recall the following bounds:
\begin{lemma}
  \label{lemma.talmm1}
  \text{\normalfont(The Majorizing Measures Bound)}
  Any sub-Gaussian process fulfills 
  \begin{equation*}
\label{eq.mmg1}
    \me \sup_{t\in T} X_t \leq K \gamma_1(T,\bar d)
  \end{equation*}
 for a universal constant $K$. 
\end{lemma}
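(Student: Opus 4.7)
The plan is to establish the bound by generic chaining tailored to a near-optimal choice of majorizing measure. Fix any probability measure $\mu$ on $T$ and let $D$ denote the diameter of $(T,\bar d)$. For each integer $n \geq 0$ define the scale $r_n := 2^{-n}D$. I would construct a hierarchy of finite nets $T_n \subset T$ by a greedy procedure: at each step, select a point whose $r_n$-ball carries the largest remaining $\mu$-mass. One may arrange that $T_0$ consists of a single point $t_0$ and that $T_n$ is an $r_n$-cover of $T$. The greedy rule then forces the key uniform estimate
\begin{equation*}
\log|T_n| \leq C\sup_{t\in T}\log\bigl(1/\mu(B(\bar d,t,r_n))\bigr)
\end{equation*}
for a universal constant $C$, because every center picked during the selection has $\mu$-mass at least as large as the worst case over $T$.

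With $\pi_n \colon T \to T_n$ a nearest-point map, I would telescope
\begin{equation*}
X_t - X_{t_0} = \sum_{n\geq 1}\bigl(X_{\pi_n(t)} - X_{\pi_{n-1}(t)}\bigr).
\end{equation*}
At level $n$ there are at most $|T_n|\cdot|T_{n-1}|$ distinct chain links, each of $\bar d$-length at most $r_n + r_{n-1} \leq 3 r_n$. A union bound over sub-Gaussian tails (in the spirit of Lemma~\ref{lemmaentropy1} applied at the discrete-chain level) yields
\begin{equation*}
\me\sup_{t\in T}|X_{\pi_n(t)} - X_{\pi_{n-1}(t)}| \leq K r_n\sqrt{\log|T_n|}.
\end{equation*}
Summing over $n$ and using $\me X_{t_0}=0$, I obtain $\me\sup_{t\in T} X_t \leq K\sum_{n\geq 1} r_n\sqrt{\log|T_n|}$.

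The final step converts the dyadic sum into the integral defining $\gamma_1$. Substituting the greedy-net estimate from the first paragraph gives
\begin{equation*}
\me\sup_{t\in T} X_t \leq K'\sup_{t\in T}\sum_{n\geq 1} r_n\sqrt{\log(1/\mu(B(\bar d,t,r_n)))},
\end{equation*}
and a standard dyadic-to-integral comparison bounds the right-hand side by a constant multiple of $\sup_{t\in T}\int_0^D\sqrt{\log(1/\mu(B(\bar d,t,\epsilon)))}\,d\epsilon$. Since $\mu$ was arbitrary, taking the infimum yields $\me\sup_{t\in T} X_t \leq K\gamma_1(T,\bar d)$.

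The main obstacle is the greedy net construction in the first paragraph: one must produce $T_n$ whose cardinalities are controlled \emph{uniformly over $t \in T$} and \emph{simultaneously across all scales $n$} by the \emph{same} probability measure $\mu$. This coupling across scales is what distinguishes majorizing measures from the entropy-based chaining of Lemma~\ref{lemmaentropy1}, where each scale is free to choose its own independent covering; it is the technical heart of the Fernique--Talagrand argument and is ultimately what enables the sharper bounds of Theorem~\ref{lemma.mainmm} in the strongly correlated setting.
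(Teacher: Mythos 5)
The paper does not actually prove this lemma; it is quoted from \cite{Rhee88}, \cite{Talagrand94} and \cite{Talagrand96}, so your sketch has to be measured against the standard Fernique--Talagrand argument. It contains a genuine gap, and it sits exactly at the point you flag as ``the main obstacle'' --- but not where you locate it. The greedy net construction is unproblematic: taking $T_n$ to be a maximal $r_n$-separated set, the balls of radius $r_n/2$ around its points are disjoint, each carries mass at least $\inf_{t\in T}\mu(B(\bar d,t,r_n/2))$, and the cardinality bound follows. The fatal step is the ``substitution'' at the end. Your union bound at level $n$ yields
\begin{equation*}
\me\sup_{t\in T}X_t\leq K\sum_{n\geq 1} r_n\sqrt{\log|T_n|}\leq K'\sum_{n\geq 1}\sup_{t\in T}\, r_n\sqrt{\log\bigl(1/\mu(B(\bar d,t,r_n))\bigr)},
\end{equation*}
with the supremum \emph{inside} the sum, whereas $\gamma_1$ places the supremum \emph{outside} the integral. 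Since $\sup_t\sum_n a_n(t)\leq\sum_n\sup_t a_n(t)$, you cannot pass from the larger quantity to the smaller one; the inequality goes the wrong way. Indeed, what you have actually derived is a form of Dudley's bound: for any probability measure, disjointness of balls around a $2r_n$-separated set forces $\sup_t\log(1/\mu(B(\bar d,t,r_n)))\geq\log D(T,\bar d,2r_n)$, so your intermediate quantity is never smaller than the entropy integral of Lemma~\ref{lemmaentropy1}, and the lemma would then say nothing beyond the entropy bound.

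The missing idea is that the chaining must be organized so that the level-$n$ contribution along the chain of a \emph{fixed} $t$ is controlled by a $t$-dependent quantity, not by a global count $|T_n|\cdot|T_{n-1}|$ at uniform radius $r_n$. In the classical argument one bounds, for each $t$ separately, the tail of $|X_{\pi_n(t)}-X_{\pi_{n-1}(t)}|$ at threshold proportional to $r_n\sqrt{\log(1/\mu(B(\bar d,\pi_n(t),r_n)))}$ plus a summable slack, and then sums the exceptional probabilities weighted by the masses $\mu(B(\bar d,\pi_n(t),r_n))$ rather than by crude cardinalities; equivalently, in the generic chaining formulation one uses admissible partitions $\mathcal{A}_n$ with $|\mathcal{A}_n|\leq 2^{2^n}$ and increments bounded by the diameter of the cell $A_n(t)$ containing $t$, which produces $\sup_t\sum_n 2^{n/2}\Delta(A_n(t))$ with the supremum correctly placed. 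Without one of these devices, uniform radii $r_n=2^{-n}D$ combined with a global union bound cannot reach the $\gamma_1$ functional, and the sharper conclusion of Theorem~\ref{lemma.mainmm} in the correlated setting would be lost.
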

\begin{lemma}
  \label{lemma.talmm2}
  \text{\normalfont (The Ellipsoid Theorem)}
  Let the metric $\bar d$ be induced by the norm on $l^2(\mn)$. Then, for 
  \begin{equation*}
    E:=\{ (t_i)_{i\geq 1}:\sum_{i\geq 1}\frac{t_i^2}{a_i^2}\leq 1
    \}\subset l^2
  \end{equation*}
  with $(a_i)_{i\geq 1}\in l^2(\mn)$ positive and non-increasing we have
  \begin{equation}
\label{eq.mmg2}   
 \gamma_2(E,\bar d)\leq K \sup_{i\geq
    1}a_i\sqrt i
  \end{equation}
for a universal constant K.
\end{lemma}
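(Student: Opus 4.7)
The plan is to exhibit a probability measure $\mu$ on $E$ for which
\[
\sup_{t\in E}\int_0^{\infty}\epsilon\,\log\frac{1}{\mu(B(\bar d,t,\epsilon))}\,d\epsilon\;\leq\;K^2,
\]
after normalising so that $S:=\sup_{i\geq 1}a_i\sqrt{i}=1$, which forces $a_i\leq 1/\sqrt{i}$ for every $i$. I would take $\mu$ to be a convex mixture $\mu=\sum_{n\geq 0}c_n\nu_n$ over a dyadic hierarchy of scales $r_n$, with each $\nu_n$ the uniform measure on a finite net $T_n\subset E$. Each $T_n$ is constructed in two stages: first, truncate $E$ to its first $k_n$ coordinates---by the normalisation the discarded tail satisfies $\sqrt{\sum_{i>k_n}t_i^2}\leq a_{k_n+1}\leq 1/\sqrt{k_n+1}$; second, discretise the resulting finite-dimensional ellipsoid by a \emph{semi-axis-adapted} product grid whose step in direction $i$ is proportional to $a_i$, so that the grid respects the ellipsoidal geometry rather than the ambient $\ell^2$ geometry.

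A Stirling/entropy count of the grid---very much in the spirit of the Carl-type estimate from Lemma~1 of~\cite{Carl85} that was used in the proof of Theorem~\ref{lemma.mainresult}---should yield $\log|T_n|\lesssim 2^n$ together with covering radius $r_n$ of the right order. Given this, for any $t\in E$ and $\epsilon\geq r_n$ the ball $B(\bar d,t,\epsilon)$ hits $T_n$, whence $\mu(B(\bar d,t,\epsilon))\geq c_n/|T_n|$ and $\log(1/\mu(B(\bar d,t,\epsilon)))\lesssim\log(1/c_n)+2^n$. Splitting the defining $\gamma_2$-integral into dyadic windows $\epsilon\in[r_{n+1},r_n]$ and summing reduces the statement to summability of the concrete series $\sum_n(r_n^2-r_{n+1}^2)(2^n+\log(1/c_n))$, which is arranged to telescope to a constant multiple of $S^2$ by an appropriate choice of $(r_n,c_n)$.

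The main obstacle is precisely the joint calibration of the four parameters $(r_n,k_n,|T_n|,c_n)$. A naive dyadic scheme---$r_n=2^{-n/2}$, $k_n=2^n$, $c_n=2^{-n}$, combined with a uniform (non-adapted) product grid---produces per-window contributions of order unity and hence a barely divergent sum, so that crude diameter- or volume-based covers of the truncated ellipsoid are not sharp enough. The remedy, and the essential content of the Ellipsoid Theorem, is the semi-axis-adapted grid combined with a non-uniform choice of weights $c_n$; these together exploit the ellipsoidal structure beyond pure boundedness and the monotonicity of $(a_i)$ beyond its mere tail decay. For the complete execution within the generic-chaining framework we refer to~\cite{Talagrand94} and~\cite{Talagrand96}.
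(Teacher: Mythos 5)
The paper does not actually prove Lemma~\ref{lemma.talmm2}: it is recalled from \cite{Talagrand94} and \cite{Talagrand96}, so your sketch has to stand on its own rather than be compared with an argument in the text. Your general framework (a mixture $\mu=\sum_n c_n\nu_n$ over nets at dyadic scales, the normalisation $\sup_i a_i\sqrt i=1$, the tail bound $\sum_{i>k}t_i^2\leq a_{k+1}^2$) is the right opening move, but the one estimate that carries all the weight --- $\nu_n$ uniform on a net $T_n$ of covering radius $r_n$, hence $\mu(B(\bar d,t,\epsilon))\geq c_n/|T_n|$ for $\epsilon\geq r_n$ --- cannot be calibrated into a proof, for a reason that no choice of $(r_n,k_n,|T_n|,c_n)$ and no ``semi-axis-adapted'' grid can circumvent. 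Any set $T_n$ with covering radius $r_n$ satisfies $\log|T_n|\geq\log N(E,\bar d,r_n)$, and for the extremal ellipsoid with $a_i=i^{-1/2}$ for $i\leq N$ and negligible $a_i$ afterwards (so that $\sup_i a_i\sqrt i=1$ and $(a_i)\in l^2$) a volume count gives $\log N(E,\bar d,\epsilon)\geq c\epsilon^{-2}$ throughout the range $N^{-1/2}\lesssim\epsilon\lesssim 1$. Your per-window contribution is therefore at least $c(r_n^2-r_{n+1}^2)/r_{n+1}^2\geq c\log(r_n^2/r_{n+1}^2)$, and summing over the windows needed to sweep $\epsilon$ from $1$ down to $N^{-1/2}$ telescopes to at least $c\log N$. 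So the bound your scheme outputs is of order $\log N$ rather than a universal constant: it is exactly the entropy (Dudley-type) integral $\int\epsilon\log N(E,\bar d,\epsilon)\,d\epsilon$ in disguise, and the ellipsoid is the classical example on which majorizing measures strictly beat entropy integrals. The weights $c_n$ cannot rescue this, since $\log(1/c_n)\geq0$ only adds to the bound, and truncating to $k_n$ coordinates does not help because the truncated net must still be a covering net of $E$ at a comparable radius.

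The missing idea --- which is the real content of the Ellipsoid Theorem --- is that the ball $B(\bar d,t,\epsilon)$ must be given far more mass than the uniform share $c_n/|T_n|$ of a single net point. In Talagrand's construction the measure assigned within each scale is strongly non-uniform; equivalently, the chaining is organised so that the local complexity seen from $t$ at scale $\epsilon$ is much smaller than the global covering number and decreases as the chain approaches the boundary of $E$. It is precisely there that the $2$-convexity of the ellipsoid, as opposed to its covering numbers, enters. Any argument that lower-bounds $\mu(B(\bar d,t,\epsilon))$ only through $|T_n|$ factors through global entropy and is therefore structurally unable to reach \eqref{eq.mmg2}. You correctly diagnose that the naive scheme ``barely diverges'', but the cure is not a finer grid or a cleverer choice of $c_n$; it is abandoning uniform measures on nets altogether.
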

Using H\"olders inequality, the bound \eqref{eq.mmg2} may be used to give an upper
bound for $\gamma_1(T,\bar d)$. Finally, it holds that
\begin{lemma}
  \label{lemma.talmm3}
  Consider a metric space (T,$\bar d$) and a subset S of T. Then,
  \begin{equation*}
\gamma_\beta(S,\bar d)\leq 2\gamma_\beta(T,\bar d).    
  \end{equation*}
\end{lemma}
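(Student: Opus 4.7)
The plan is to take any admissible probability measure $\mu$ on $T$ and manufacture from it an admissible probability measure $\nu$ on $S$ whose defining integral is, uniformly in the base point, bounded by $2^\beta$ times the corresponding integral for $\mu$; passing to infima over $\mu$ will then deliver the factor $2$ claimed in the lemma. Concretely, I would fix a (measurable) nearest-point projection $\pi:T\to S$, i.e.\ a map satisfying $\bar d(\pi(t),t)\leq \bar d(s,t)$ for every $s\in S$, and set $\nu:=\pi_*\mu$, the pushforward of $\mu$ to the Borel $\sigma$-algebra of $S$.

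The crucial geometric fact is a ball-inclusion: for every $s\in S$ and every $\epsilon>0$,
\[
   B(\bar d,s,\epsilon/2)\;\subset\;\pi^{-1}\bigl(B(\bar d,s,\epsilon)\bigr),
\]
where the left-hand ball is taken in $T$ and the right-hand ball in $S$. Indeed, if $\bar d(t,s)\leq \epsilon/2$, then by the defining property of $\pi$ we have $\bar d(\pi(t),t)\leq \bar d(s,t)\leq \epsilon/2$, and the triangle inequality yields $\bar d(\pi(t),s)\leq \epsilon$. Taking $\mu$-measure and using the definition of the pushforward gives $\mu(B(\bar d,s,\epsilon/2))\leq \nu(B(\bar d,s,\epsilon))$, and hence $\log(1/\nu(B(\bar d,s,\epsilon)))\leq \log(1/\mu(B(\bar d,s,\epsilon/2)))$.

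Plugging this into the definition of $\gamma_\beta$ and performing the change of variables $\epsilon'=\epsilon/2$ in the resulting integral produces, for each $s\in S$,
\[
 \int_0^\infty \epsilon^{\beta-1}\Bigl(\log\tfrac{1}{\nu(B(\bar d,s,\epsilon))}\Bigr)^{\!\beta/2}\!d\epsilon
  \;\leq\; 2^\beta \int_0^\infty (\epsilon')^{\beta-1}\Bigl(\log\tfrac{1}{\mu(B(\bar d,s,\epsilon'))}\Bigr)^{\!\beta/2}\!d\epsilon'.
\]
Taking the $(1/\beta)$-th root extracts the factor $2$. Since $S\subset T$, the supremum of the right-hand side over $s\in S$ is at most the supremum over $t\in T$; hence the functional defining $\gamma_\beta(S,\bar d)$ evaluated at $\nu$ is at most $2$ times the functional defining $\gamma_\beta(T,\bar d)$ evaluated at $\mu$. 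Taking the infimum over $\mu$ closes the argument.

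The only point that requires care is the existence of a measurable nearest-point selection $\pi$, which may fail in a general metric space. I would sidestep this in the standard way by working with an approximate projection $\pi_\delta$ satisfying $\bar d(\pi_\delta(t),t)\leq \mathrm{dist}(t,S)+\delta$ for arbitrarily small $\delta>0$; the ball inclusion above then holds with $\epsilon/2$ replaced by $\epsilon/2-\delta/2$, and letting $\delta\downarrow 0$ recovers the constant $2$. This is the step I expect to be the main technical nuisance, but it is routine and does not affect the constant in the stated bound.
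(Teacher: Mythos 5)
Your argument is correct, and it is the standard one: push a competitor measure $\mu$ on $T$ forward to $S$ along a (near-)nearest-point projection, use the ball inclusion $B(\bar d,s,\epsilon/2)\subset\pi^{-1}(B(\bar d,s,\epsilon))$ to compare the two log-terms, and extract the factor $2$ from the substitution $\epsilon\mapsto\epsilon/2$. Note that the paper itself does not prove Lemma~\ref{lemma.talmm3}; it is quoted from the literature on majorizing measures, where exactly this pushforward argument appears, so there is no in-paper proof to diverge from. The one point you rightly flag, measurability of the selection $\pi$, is a genuine issue in an abstract metric space, but your approximate-projection workaround handles it, and in the only place the lemma is used in this paper ($S$ a maximal separated, hence finite, subset of an ellipsoid $E\subset\mr^{2n}$) a measurable nearest-point map exists trivially, so the caveat is moot for the application.
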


\subsection{Proof of Theorem~\ref{lemma.mainmm}}
\label{sec.appmaj}

Now, we show how the process of Theorem~\ref{lemma.mainmm} can be rewritten
such that the relevant set is an ellipsoid and how the bounds
stated above can then be applied. To find reasonable results, however, we have to assume strong correlation among
the covariables. By this, we mean that the columns of the corresponding
matrices are not too different. Or, more precisely, that the columns regarded as
vectors can be collectively enveloped by a small ellipsoid.\\

At first, we state a well known fact:

\begin{proposition}
\label{lemma.symmetric}
Let $\{X_t:t\in T \}$ be a stochastic process with an arbitrary index set
$T$. Assume that the  $\me \sup_{t\in T}X_t=\me \sup_{t\in T}(-X_t)$. Then,
  \begin{equation*}
     \me\sup_{\substack{t\in T}}|X_t|-\me|X_{t_0}|
\leq 2\me\sup_{\substack{t\in T}}X_t
  \end{equation*}
for $t_0\in T$ arbitrary.
\end{proposition}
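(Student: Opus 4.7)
The plan is a short triangle-inequality argument that uses the symmetry assumption only at the very end.

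First I would center the process at $t_0$ by writing
\begin{equation*}
  \sup_{t\in T}|X_t| \;\le\; |X_{t_0}| + \sup_{t\in T}|X_t - X_{t_0}|,
\end{equation*}
which follows pointwise from $|X_t| \le |X_{t_0}| + |X_t - X_{t_0}|$. Taking expectations and rearranging reduces the claim to showing
\begin{equation*}
  \me\sup_{t\in T}|X_t - X_{t_0}| \;\le\; 2\,\me\sup_{t\in T}X_t.
\end{equation*}

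Next I would split the absolute value. Since $|X_t - X_{t_0}| = \max(X_t - X_{t_0},\, X_{t_0} - X_t)$, the supremum splits as
\begin{equation*}
  \sup_{t\in T}|X_t-X_{t_0}| \;=\; \max\!\Bigl(\sup_{t\in T}(X_t-X_{t_0}),\,\sup_{t\in T}(X_{t_0}-X_t)\Bigr).
\end{equation*}
Both terms inside the max are nonnegative, because $t_0 \in T$ makes each supremum at least $X_{t_0} - X_{t_0} = 0$. Hence $\max(a,b) \le a+b$ applies, and after pulling $X_{t_0}$ out of each supremum the two copies of $X_{t_0}$ cancel:
\begin{equation*}
  \sup_{t\in T}|X_t-X_{t_0}| \;\le\; \sup_{t\in T}X_t \;+\; \sup_{t\in T}(-X_t).
\end{equation*}

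Finally I would take expectations and invoke the hypothesis $\me\sup_{t\in T}X_t = \me\sup_{t\in T}(-X_t)$ to collapse the right-hand side to $2\,\me\sup_{t\in T}X_t$. Combined with the first display this yields the claim. There is no real obstacle: the only substantive observation is that the non-negativity of both centered suprema (needed to replace $\max$ by sum) is automatic once one insists $t_0 \in T$, and the symmetry hypothesis enters exactly once at the end to identify $\me\sup_t(-X_t)$ with $\me\sup_t X_t$.
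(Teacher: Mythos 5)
Your proof is correct. The paper states this proposition as a well-known fact and gives no proof at all, so there is nothing to compare against; your argument --- centering at $X_{t_0}$, writing $\sup_{t}|X_t-X_{t_0}|$ as a maximum of two suprema that are nonnegative because $t_0\in T$, bounding the maximum by the sum, and invoking the symmetry hypothesis $\me\sup_{t\in T}X_t=\me\sup_{t\in T}(-X_t)$ exactly once at the end --- is the standard argument and every step is valid.
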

\noindent Moreover, we set $\frac{0}{0}:=0$ and we
denote by $\sconv A$ the symmetric convex hull of a set $A$. We are then prepared to give the proof of the theorem:
\begin{proof}[Proof~of~Theorem~\ref{lemma.mainmm}]
  Setting
  \begin{align*}
   T'&:=
M\cdot \sconv\left\{ y'_1,...,y'_{l'} \right\}
\\
T''&:=
M\cdot \sconv\left\{ y''_1,...,y''_{l''} \right\}
  \end{align*}
we obtain
\begin{equation*}
\me \sup_{\theta\in
      \Theta}\left|\sum_{i=1}^n\epsilon_ig((x_i')^T\theta',(x_i'')^T\theta'')\right|\leq
\me \sup_{t\in
      T'\times T''}\left|\sum_{i=1}^n\epsilon_ig(t_i',t_i'')\right|.
\end{equation*}
Next we define $a_i^2 := \frac{4n}{i}\cdot M^2$, $(\Pi'(t))_i:=t_{2i-1}$
and $(\Pi''(t))_i:=t_{2i}$. Furthermore, 
\begin{equation*}
  E:=\{ t\in \mr^{2n}:\sum_{i= 1}^{2n}\frac{t_i^2}{a_i^2}\leq 1\}.
\end{equation*}
Then,
\begin{equation*} 
\me \sup_{\theta\in
      \Theta}\left|\sum_{i=1}^n\epsilon_ig((x_i')^T\theta',(x_i'')^T\theta'')\right|
\leq 
\me \sup_{t\in
      E}\left|\sum_{i=1}^n\epsilon_ig((R'^{-1}\Pi'(t))_i,(R''^{-1}\Pi''(t))_{i})\right|.
\end{equation*}
To simplify the notation, we define
\begin{equation*}
  g_i(t):=g((R'^{-1}\Pi'(t))_i,(R''^{-1}\Pi''(t))_{i})
\end{equation*}
and we note that since $g$ is a contraction 
\begin{equation}\label{eq.contractineq}
\bar  d(t,\tilde{t}):=\sqrt{\sum_{i=1}^n (g_i(t)-g_i(\tilde{t}))^2}\leq \Vert t-\tilde{t}\Vert_2=:d_2(t,\tilde{t}).
\end{equation}
Now, let S be a maximal subset of E such that $
  \bar d(t,\tilde{t})>M$ for all $t, \tilde{t}\in S,t\neq \tilde{t}
$. Consequently, $(S,d_2)$ is a metric space and we have due to Cauchy-Schwarz' inequality
\begin{equation*}
  \sup_{t\in
      E}\left|\sum_{i=1}^n\epsilon_ig_i(t)\right|\leq \sqrt{n}M + \sup_{t\in
      S}\left|\sum_{i=1}^n\epsilon_ig_i(t)\right|.
\end{equation*}
With regard to Proposition~\ref{lemma.symmetric}, the quantity to calculate is
\begin{equation*}
  \me \sup_{t\in
      S}\sum_{i=1}^n\epsilon_ig_i(t).
\end{equation*}
To bound this quantity, we apply Hoeffding's inequality, the contraction
property~\eqref{eq.contractineq} and
Lemma~\ref{lemma.talmm1} to obtain for a universal constant $K$
\begin{equation*}
   \me \sup_{t\in
      S}\sum_{i=1}^n\epsilon_ig_i(t)\leq K\gamma_1(S,d_2).
\end{equation*}
Moreover, $
  d_2^2
(t,0)\leq \sum_{i=1}^{2n} a_i^2\leq 8n^2M^2$, so that we arrive at (using
H\"olders inequality)
\begin{align*}
  &\int_0^\infty \sqrt{\log\frac{1}{\mu(B(d_2,t,\epsilon))}}d\epsilon \\
\leq&\int_0^M \sqrt{\log\frac{1}{\mu(B(d_2,t,\epsilon))}}d\epsilon +
\left(\int_M^{4 n M}\frac{d\epsilon}{\epsilon}\right)^{\frac{1}{2}}
\left(\int_0^{\infty} \epsilon \log\frac{1}{\mu(B(d_2,t,\epsilon))}d\epsilon \right )^{\frac{1}{2}}
\end{align*}
We stress, that the balls are with
respect to the set $S$. Finally, 
\begin{equation*}
  \sqrt 2 \left(\int_0^{\infty}
    \epsilon \log\frac{1}{\mu(B(d_2,t,\epsilon))}d\epsilon \right
  )^{\frac{1}{2}}\geq \int_0^M \sqrt{\log\frac{1}{\mu(B(d_2,t,\epsilon))}}d\epsilon.
\end{equation*}
Thus, the proof can be concluded using Lemma~\ref{lemma.talmm2}
and Lemma~\ref{lemma.talmm3}.
\end{proof}


\section{Conclusion}

Classical entropy bounds have proved to be a simple and useful tool in many
applications. However, Majorizing Measures are a priori more powerful in the
treatment of empirical processes. They are
known to outmatch the classical entropy bounds for unit balls of $p$-convex Banach spaces as index
sets. While this is true, the unit ball of $(\mr^l,\Vert
\cdot\Vert_1)$ is not $p$-convex. So far, we only found reasonable results
with Majorizing Measures by invoking high correlation. The results were in
this case independent of the dimension $l$, which is quite important since we
often assume $l\gg n$.

\subsection*{Acknowledgments}
I thank Sara van de Geer for the excellent support. Furthermore, I thank
Mohamed Hebiri for his interest and some helpful suggestions.

\bibliography{../Thesis/Bib/Literatur}

\end{document}